\numberwithin{equation}{section}
\newtheorem{thm}{Theorem}[section]
\newtheorem{cor}[thm]{Corollary}
\newtheorem{prop}[thm]{Proposition}
\newtheorem{lemma}[thm]{Lemma}
\theoremstyle{remark}
\newtheorem{rem}[thm]{Remark}
\theoremstyle{definition}
\newtheorem{defn}[thm]{Definition}
\newcommand{\coloneqq}{\mathrel{\mathop:}=}
\newcommand{\eqqcolon}{=\mathrel{\mathop:}}
\newcommand{\applied}[2]{\langle #1,#2\rangle}
\newcommand{\dx}{\mathrm{d}}
\renewcommand{\phi}{\varphi}
\newcommand{\eps}{\varepsilon}
\newcommand{\R}{\mathds{R}}
\newcommand{\N}{\mathds{N}}
\DeclareMathOperator{\supp}{supp}
\newcommand{\fix}{\mathrm{fix}}
\newcommand{\cD}{\mathscr{D}}
\newcommand{\cL}{\mathscr{L}}
\newcommand{\cM}{\mathscr{M}}
\newcommand{\cC}{\mathscr{C}}
\newcommand{\cT}{\mathscr{T}}
\newcommand{\cS}{\mathscr{S}}
\newcommand{\cB}{\mathscr{B}}
\newcommand{\cA}{\mathscr{A}}
\begin{document}

\title[A Tauberian Theorem for Strong Feller Semigroups]{A Tauberian Theorem\\for Strong Feller Semigroups}
\author{Moritz Gerlach}

\begin{abstract}
	We prove that a weakly ergodic, eventually strong Feller semigroup on the space of measures
	on a Polish space converges strongly to a projection onto its fixed space.
\end{abstract}

\subjclass[2010]{Primary: 40E05, Secondary: 47D07, 47G10}
\keywords{Tauberian theorem, Markovian semigroup, mean ergodic, strong Feller, kernel operator}
\maketitle

\section{Introduction}

In the study of Markov processes one is interested in semigroups of operators on the space of measures that describe the evolution
of distributions. 
Of specific importance is the question under which conditions such a semigroup is stable in the sense that for every initial distribution 
the process converges to an invariant measure as time goes to infinity. 
A celebrated theorem by Doob asserts that a stochastically continuous Markov semigroup is stable if it admits an invariant measure, is irreducible
and has the strong Feller property; see \cite{doob48}, \cite[Thm 4.4]{gerlach2014}, 
\cite{gerlach2012}, \cite{stettner1994} and \cite{seidler1997} for various versions and proofs of this result.
We recall that a semigroup on the space of measures is said to have the 
strong Feller property if its adjoint maps bounded measurable functions to continuous ones.

A necessary condition for stability of a semigroup is weak ergodicity, i.e.\ convergence
of the Ces\`aro averages in the weak topology induced by the bounded continuous functions.
In \cite{gerlach2013} M.\ Kunze and the author characterized 
ergodicity of semigroups on general norming dual pairs 
in the spirit of the classical mean ergodic theorem.
In particular for eventually strong Feller Markov semigroups it was shown that they are weakly ergodic if
the space of invariant measures separates the space of invariant continuous functions.

In the present article we prove that for eventually strong Feller semigroups 
weak ergodicity is already sufficient for stability, i.e.\ pointwise convergence of the semigroup
in the total variation norm.
In comparison to Doob's classical result, this Tauberian theorem even shows stability of not necessarily irreducible semigroups whose fixed space
is of arbitrary high dimension.

\medskip
In the following section 
we show that the square of every strong Feller operator is a kernel operator
in the sense that it belongs to the band generated by the finite rank operators.
Section \ref{sec:main} addresses Markov semigroups and their asymptotic behavior and 
contains the proof of our main result, Theorem \ref{thm:tauberian}.

\section{Strong Feller and Kernel operators}
\label{sec:kernelops}
Throughout, $\Omega$ denotes a Polish space and $\mathscr{B}(\Omega)$ its  Borel $\sigma$-algebra.
We denote by $\mathscr{M}(\Omega), B_b(\Omega)$ and $C_b(\Omega)$ the spaces of signed measures on 
$\mathscr{B}(\Omega)$, the space of bounded, Borel-measurable functions on $\Omega$ and the space 
of bounded continuous functions on $\Omega$, respectively.
We denote by $\applied{\; \cdot\;}{\; \cdot\;}$ the duality between $B_b(\Omega)$ and $\mathscr{M}(\Omega)$.

\begin{defn}
\label{def:transkernel}
	A \emph{Markovian transition kernel} on $\Omega$ is a map $k: \Omega\times\mathscr{B}(\Omega)\to \R_+$ such that
	\begin{enumerate}[(a)]
	\item $A \mapsto k(x, A)$ is a probability measure for every $x\in \Omega$ and
	\item $x \mapsto k(x,A)$ is a measurable function for every $A\in \cB(\Omega)$.
	\end{enumerate}
\end{defn}

To each Markovian transition kernel $k$, one can associate a positive operator $T \in \mathscr{L}(\mathscr{M}(\Omega))$ by setting
\begin{align}\label{eq.kernel}
(T\mu)(A)  \coloneqq \int_\Omega k(x,A)\, \dx\mu (x).
\end{align}
for all $\mu \in \cM(\Omega)$ and $A \in \cB(\Omega)$. 
The following lemma characterizes operators of this form.

\begin{lemma}
\label{lem:weaklycont}
For a positive operator $T \in \mathscr{L}(\mathscr{M}(\Omega))$ the following assertions are equivalent:
\begin{enumerate}[(i)]
\item There exists a Markovian transition kernel $k$ such that $T$ is given by \eqref{eq.kernel}.
\item The norm adjoint $T^*$ of $T$ leaves $B_b(\Omega)$ invariant and $T^* \mathds{1} = \mathds{1}$.
\item The operator $T$ is continuous in the $\sigma(\mathscr{M}(\Omega), B_b(\Omega))$-topology.
\end{enumerate}
\end{lemma}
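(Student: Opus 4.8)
The plan is to prove the three equivalences in a cycle: $(i)\Rightarrow(ii)\Rightarrow(iii)\Rightarrow(i)$. Throughout I will exploit the fact that the pairing $\applied{\;\cdot\;}{\;\cdot\;}$ between $B_b(\Omega)$ and $\mathscr{M}(\Omega)$ is a norming dual pair, so that the weak topology $\sigma(\mathscr{M}(\Omega),B_b(\Omega))$ is well behaved, and that $B_b(\Omega)$ can be regarded canonically as a subspace of the norm dual $\mathscr{M}(\Omega)^*$ via $f \mapsto \applied{f}{\;\cdot\;}$.

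\medskip
For $(i)\Rightarrow(ii)$, I would first check that when $T$ is given by \eqref{eq.kernel}, its norm adjoint $T^*$ acts on $f\in B_b(\Omega)$ by $(T^*f)(x)=\int_\Omega f(y)\,k(x,\dx y)$. This is a direct computation: testing $T\mu$ against $f$ and applying Fubini (justified because $f$ is bounded, $\mu$ is finite, and $k(x,\cdot)$ is a probability measure) yields $\applied{f}{T\mu}=\applied{T^*f}{\mu}$ with $T^*f$ the stated kernel average. Measurability of $x\mapsto (T^*f)(x)$ follows for indicator functions from property (b) of the kernel and extends to general bounded measurable $f$ by the usual monotone-class/approximation argument, so $T^*$ leaves $B_b(\Omega)$ invariant. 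Taking $f=\mathds{1}$ and using that each $k(x,\cdot)$ is a probability measure gives $T^*\mathds{1}=\mathds{1}$.

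\medskip
The implication $(ii)\Rightarrow(iii)$ is the soft functional-analytic step. If $T^*$ maps $B_b(\Omega)$ into itself, then for every fixed $f\in B_b(\Omega)$ and every net $\mu_\alpha\to\mu$ in $\sigma(\mathscr{M}(\Omega),B_b(\Omega))$ we have $\applied{f}{T\mu_\alpha}=\applied{T^*f}{\mu_\alpha}\to\applied{T^*f}{\mu}=\applied{f}{T\mu}$, since $T^*f\in B_b(\Omega)$ is an admissible test function. Hence $T$ is weak continuous; the hypothesis $T^*\mathds{1}=\mathds{1}$ is not needed here and is only recorded to feed into the kernel representation. The hard part of the whole lemma is $(iii)\Rightarrow(i)$, where one must manufacture a transition kernel out of pure weak continuity. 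My plan is to define, for each $x\in\Omega$, the candidate measure $k(x,\cdot)\coloneqq T\delta_x$, where $\delta_x$ is the Dirac measure at $x$; this is a genuine element of $\mathscr{M}(\Omega)$ and is positive because $T$ is positive. Weak continuity of $T$ forces $T^*$ to leave $B_b(\Omega)$ invariant (the adjoint of a weakly continuous operator maps test functions to test functions on a norming dual pair), and from $T^*\mathds{1}$ one reads off that $k(x,\Omega)=\applied{\mathds{1}}{T\delta_x}=\applied{T^*\mathds{1}}{\delta_x}$; to conclude that this equals $1$ one needs the normalization, so in the pure equivalence this total mass is at least finite and, if one wants an honest \emph{Markovian} kernel, one imports $T^*\mathds{1}=\mathds{1}$ as already available from $(ii)$ via the cycle. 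I expect the two genuine obstacles to be: first, verifying \emph{measurability} of $x\mapsto k(x,A)=\applied{\mathds{1}_A}{T\delta_x}$ for each Borel set $A$, which I would obtain by noting $k(x,A)=(T^*\mathds{1}_A)(x)$ with $T^*\mathds{1}_A\in B_b(\Omega)$ measurable; and second, showing that the kernel so defined actually reproduces $T$ on all of $\mathscr{M}(\Omega)$, i.e.\ $\int_\Omega k(x,A)\,\dx\mu(x)=(T\mu)(A)$. For the latter I would verify the identity first on finite linear combinations of Dirac measures, where it holds by construction, and then extend to arbitrary $\mu\in\mathscr{M}(\Omega)$ by a density/approximation argument in the weak topology, invoking precisely the weak continuity hypothesis $(iii)$ together with an application of Fubini once the integrand's measurability has been secured.
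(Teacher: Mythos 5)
The paper offers no argument of its own here---its ``proof'' is a citation of Propositions 3.1 and 3.5 of Kunze's paper---so your self-contained cyclic proof is necessarily a different route, and most of it is sound: $(i)\Rightarrow(ii)$ via Fubini and a monotone-class argument is correct, $(ii)\Rightarrow(iii)$ is the right soft step, and in $(iii)\Rightarrow(i)$ the construction $k(x,\cdot)\coloneqq T\delta_x$ together with measurability read off from $k(x,A)=(T^*\mathds{1}_A)(x)$ is exactly the standard device. One simplification: your Dirac-density and Fubini detour for the representation identity is redundant. Once $\sigma(\mathscr{M}(\Omega),B_b(\Omega))$-continuity has given you $T^*B_b(\Omega)\subseteq B_b(\Omega)$ (correct, since the dual of $\mathscr{M}(\Omega)$ under this weak topology is $B_b(\Omega)$), the identity $(T\mu)(A)=\applied{\mathds{1}_A}{T\mu}=\applied{T^*\mathds{1}_A}{\mu}=\int_\Omega k(x,A)\,\dx\mu(x)$ holds for \emph{all} $\mu\in\mathscr{M}(\Omega)$ in one line; no approximation by discrete measures is needed (though that density claim is itself true, by a bipolar argument).

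There is, however, one genuine logical defect: the normalization. In a cycle $(i)\Rightarrow(ii)\Rightarrow(iii)\Rightarrow(i)$ you may use only $(iii)$ when proving $(iii)\Rightarrow(i)$; ``importing $T^*\mathds{1}=\mathds{1}$ as already available from $(ii)$ via the cycle'' is circular and invalidates the step as written. Moreover the gap cannot be argued away, because $(iii)\Rightarrow(i)$ is false as literally printed: $T=2I$ is positive and $\sigma(\mathscr{M}(\Omega),B_b(\Omega))$-continuous, yet its kernel $k(x,\cdot)=2\delta_x$ consists of non-probability measures and $T^*\mathds{1}=2\cdot\mathds{1}$. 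What Kunze's cited propositions actually give is the unnormalized equivalence for bounded operators---$\sigma$-continuity $\Leftrightarrow$ $T^*B_b(\Omega)\subseteq B_b(\Omega)$ $\Leftrightarrow$ representation by a bounded transition kernel---with the Markovian normalization carried along separately on each side. The honest repair of your proof is exactly that restructuring: run your cycle for the unnormalized statements, then note that $k(x,\Omega)=(T^*\mathds{1})(x)$, so that, given positivity, the condition $T^*\mathds{1}=\mathds{1}$ is \emph{equivalent} to every $k(x,\cdot)$ being a probability measure; item $(iii)$ of the lemma must then be read as including this normalization (a slight abridgement in the paper's statement that your blind attempt in effect uncovered, but papered over with a circular move rather than flagging).
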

\begin{proof}
This follows from Propositions 3.1 and 3.5 of \cite{kunze2011}.
\end{proof}

\begin{defn}
If an operator $T\in \cL(\cM(\Omega))$ satisfies the equivalent conditions from Lemma \ref{lem:weaklycont},
then $T$ is called \emph{Markovian} and we write $T'$ for the restriction of $T^*$ to $B_b(\Omega)$.

If a Markovian operator $T\in \cL(\cM(\Omega))$ even satisfies $T' f \in C_b(\Omega)$ for all $f\in B_b(\Omega)$, then $T$
is called \emph{strong Feller} and if, in addition, the family
	\[ \{ T'f : f \in B_b(\Omega),\; \lvert f\rvert \leq c\mathds{1} \} \]
	is equi-continuous for all $c> 0$, then $T$ is said to be \emph{ultra Feller}.
\end{defn}

It is well know that the product of two strong Feller operators is ultra Feller, see \cite[\S 1.5]{revuz1975}.

We recall that for two Riesz spaces $E$ and $F$ a linear operator from $E$ to $F$ is called regular if
it is the difference of two positive operators. If the Riesz space $F$ is order complete, the regular
operators from $E$ to $F$ form itself a order complete Riesz space. By $E^*_{\text{oc}}$ we denote the order continuous
linear functionals on $E$.

\begin{defn}
\label{def:kernelops}
Let $E$ and $F$ be Riesz spaces and $F$ be order complete. 
We denote by $E^*_{\text{oc}}\otimes F$ the space of order continuous finite rank operators from $E$ to $F$.
The elements of $(E^*_{\text{oc}}\otimes F)^{\bot\bot}$, the band generated by $E^*_{\text{oc}}\otimes F$ in the regular operators from $E$ to $F$,
are called \emph{kernel operators}.
\end{defn}

Since $\mathscr{M}(\Omega)$ is a $L$-space, its norm is order continuous. Therefore,
every bounded linear operator on $\mathscr{M}(\Omega)$ is regular and order continuous
and $\cM(\Omega)^*=\cM(\Omega)^*_{\text{oc}}$. Thus, 
$\mathscr{L}(\mathscr{M}(\Omega))$ is an order complete
Banach lattice with respect to the natural ordering, see \cite[Thm IV 1.5]{schaefer1974}.

We use the following 
characterization of kernel operators on $L^\infty$-spaces due to Bukhvalov:
\begin{thm}
\label{thm:bukhvalov}
Let $\mu$ and $\nu$ be finite measures on $(\Omega,\cB(\Omega))$ and $T$ a bounded
linear operator from $L^\infty(\Omega,\nu)$ to $L^\infty(\Omega,\mu)$.
Then $T$ is a kernel operator if and only if $\lim Tf_n =0$ $\mu$-almost everywhere
for each bounded sequence $(f_n)\subset L^\infty(\Omega,\nu)$ satisfying $\lim \lVert f_n \rVert_{L^1(\Omega,\nu)} =0$.
\end{thm}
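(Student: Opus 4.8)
The plan is to prove both implications through the band structure of the regular operators from $E\coloneqq L^\infty(\Omega,\nu)$ to $F\coloneqq L^\infty(\Omega,\mu)$. Since $F$ is a Dedekind complete $AM$-space with unit, $T$ is automatically regular, so moduli, positive parts and band projections are available. One has $E^*_{\mathrm{oc}}=L^1(\Omega,\nu)$, so the order continuous finite rank operators of Definition \ref{def:kernelops} are spanned by the rank one operators $\phi\otimes g\colon f\mapsto\bigl(\int f\phi\,\dx\nu\bigr)g$ with $\phi\in L^1(\Omega,\nu)$ and $g\in F$; I abbreviate this generating set by $B$, so that the kernel operators are the band $B^{\bot\bot}$ and the remaining ones its disjoint complement $B^{\bot}$. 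Calling the convergence condition in the statement property (B), I would base everything on two facts: every kernel operator has (B), while no nonzero operator in $B^{\bot}$ does. Granting these, the band decomposition $T=K+R$ with $K\in B^{\bot\bot}$ and $R\in B^{\bot}$ settles the equivalence, because $K$ has (B), hence $R=T-K$ has (B) exactly when $T$ does, and then $R\in B^{\bot}$ forces $R=0$, i.e.\ $T=K$ is a kernel operator.

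For the necessity I would first reduce to $T\ge 0$: kernel operators form an ideal, so $|T|\in B^{\bot\bot}$, and from $|Tf_n|\le |T|\,|f_n|$ together with $\lVert|f_n|\rVert_{L^1}=\lVert f_n\rVert_{L^1}\to 0$ it suffices to treat $|T|$. A positive rank one operator $\phi\otimes g$ has (B) by the elementary estimate $\bigl|\int f_n\phi\,\dx\nu\bigr|\le N\lVert f_n\rVert_{L^1}+\lVert f_n\rVert_\infty\int_{\{|\phi|>N\}}|\phi|\,\dx\nu$, and the same domination argument shows that every positive operator in the ideal generated by $B$ inherits (B). A positive $T\in B^{\bot\bot}$ is the order supremum of such operators; reducing this directed supremum to an increasing sequence $U_k\uparrow T$ (here I use separability coming from $\Omega$ being Polish) and estimating $|Tf_n-U_kf_n|\le M\,(T-U_k)\mathds{1}$ for $|f_n|\le M\mathds{1}$, a diagonal argument over a full measure set on which simultaneously $U_kf_n\to 0$ in $n$ and $(T-U_k)\mathds{1}\to 0$ in $k$ yields $Tf_n\to 0$ $\mu$-almost everywhere.

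The substance lies in the sufficiency, for which I would prove the key lemma that a regular operator $R$ with $0\ne|R|\in B^{\bot}$ violates property (B). Since $B^{\bot}$ is a band, $R^+,R^-\in B^{\bot}$, and after possibly replacing $R$ by $-R$ there are a set $A$ with $\mu(A)>0$ and $\delta>0$ with $R^+\mathds{1}\ge\delta$ on $A$. The idea is to exploit that $R^+$ is disjoint both from $R^-$ and from the rank one operator $\mathds{1}\otimes\mathds{1}_A\in B$, hence from $R^-+\mathds{1}\otimes\mathds{1}_A$. Applying the Riesz--Kantorovich formula $(S\wedge R')u=\inf\{Su_1+R'u_2:u_1,u_2\ge 0,\ u_1+u_2=u\}$ with $S=R^+$, $R'=R^-+\mathds{1}\otimes\mathds{1}_A$ and $u=\mathds{1}$, and integrating the vanishing infimum against the order continuous functional $\int_A\cdot\,\dx\mu$, I can pick for each $n$ a splitting $\mathds{1}=u_1^n+u_2^n$ into positive parts with $\int_A R^+u_1^n\,\dx\mu\to 0$, $\int_A R^-u_2^n\,\dx\mu\to 0$ and $\lVert u_2^n\rVert_{L^1}\to 0$. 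Setting $h_n\coloneqq u_2^n$ gives an order bounded sequence with $\lVert h_n\rVert_{L^1}\to 0$, while $\int_A Rh_n\,\dx\mu=\int_A R^+\mathds{1}\,\dx\mu-\int_A R^+u_1^n\,\dx\mu-\int_A R^-h_n\,\dx\mu\to\int_A R^+\mathds{1}\,\dx\mu\ge\delta\mu(A)>0$. As $|Rh_n|\le|R|\mathds{1}\in F$ is a fixed dominating function, $Rh_n\to 0$ $\mu$-a.e.\ on $A$ would force $\int_A Rh_n\,\dx\mu\to 0$ by dominated convergence, a contradiction; hence $Rh_n\not\to 0$ $\mu$-a.e.

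I expect the main obstacle to be exactly this construction: converting the purely order theoretic disjointness $|R|\in B^{\bot}$ into a concrete, order bounded and $L^1$-null test sequence whose image stays bounded below in $L^1(A,\mu)$. The triple disjointness packaged in $R^+\perp(R^-+\mathds{1}\otimes\mathds{1}_A)$, the extraction of the splittings from the infimum formula, and the crucial use of the finiteness of $\nu$ (which guarantees $\mathds{1}\in L^1(\Omega,\nu)$, so that $\mathds{1}\otimes\mathds{1}_A$ really lies in $B$) are the delicate points; by contrast, the reduction of the directed supremum to a sequence in the necessity part is routine given the separability of the spaces involved.
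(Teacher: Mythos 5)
Your necessity half (kernel operator $\Rightarrow$ the a.e.-convergence property, which you call (B)) is essentially sound: regularity of $T$ holds automatically because $L^\infty(\Omega,\mu)$ is a Dedekind complete AM-space with unit, the reduction to $|T|$, the truncation estimate for rank ones $\phi\otimes g$, domination through the ideal generated by $B$, and the reduction of the directed supremum to a sequence all work — though the countable sup property you need comes from the finiteness of $\mu$ (the strictly positive order continuous functional $f\mapsto\int f\,\dx\mu$ on $L^\infty(\Omega,\mu)$), not from $\Omega$ being Polish. Note also that the paper does not prove this theorem at all: it cites Bukhvalov's theorem \cite[Thm 96.5]{zaanen1983} together with \cite[Prop IV 9.8]{schaefer1974}, so any self-contained argument is necessarily a different route, and the sufficiency direction is exactly the hard content of the cited result.

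That sufficiency direction is where your proof has a genuine gap, at the step you yourself flag as delicate. From $R^+\wedge\bigl(R^-+\mathds{1}\otimes\mathds{1}_A\bigr)=0$ you conclude, ``integrating the vanishing infimum,'' that there are splittings $\mathds{1}=u_1^n+u_2^n$ with $\int_A\bigl(R^+u_1^n+R'u_2^n\bigr)\,\dx\mu\to 0$. But the Riesz--Kantorovich set $M=\{R^+u_1+R'u_2 : u_1,u_2\geq 0,\ u_1+u_2=\mathds{1}\}$ is \emph{not} downward directed, and order continuity of $\psi=\int_A\cdot\,\dx\mu$ only yields $\inf\psi(M)=\psi(\inf M)$ for directed sets. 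In fact, by Riesz--Kantorovich at the functional level,
\[ \inf\{\psi(Su_1)+\psi(Tu_2) : u_1+u_2=u\} = \bigl((\psi\circ S)\wedge(\psi\circ T)\bigr)(u), \]
and $S\wedge T=0$ does not imply $(\psi\circ S)\wedge(\psi\circ T)=0$: take $\nu=\delta_x$ and $\mu=\delta_{y_1}+\delta_{y_2}$, so $L^\infty(\Omega,\nu)\cong\R$ and $L^\infty(\Omega,\mu)\cong\R^2$, with $S1=(1,0)$, $T1=(0,1)$ and $\psi$ integration against $\mu$; then $S\wedge T=0$ while $\psi(Su_1+Tu_2)=u_1+u_2=1$ for \emph{every} splitting. (This is no counterexample to the theorem — there $B^\bot=\{0\}$ — but it kills your inference as a general step.) Since $\psi\circ(\mathds{1}\otimes\mathds{1}_A)=\mu(A)\,\nu$ is a strictly positive order continuous functional on $L^\infty(\Omega,\nu)$, what your construction actually requires is that $\psi\circ R^+$ be disjoint from $\nu$, i.e.\ a purely singular functional, for every $0\leq R^+\in B^\bot$; that statement does not follow from the two-piece disjointness you invoke, and proving it is essentially equivalent to the hard half of Bukhvalov's theorem. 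Passing to finite partitions $\mathds{1}=\sum_i x_i$ and the (genuinely directed) formula $(S\wedge T)u=\inf\sum_i Sx_i\wedge Tx_i$ does not repair the argument either, because $\psi(Sx_i\wedge Tx_i)$ being small does not make $\min\bigl(\psi(Sx_i),\psi(Tx_i)\bigr)$ small, so no grouping of the pieces into $u_1^n,u_2^n$ is available. As it stands, your key lemma is unproved, and the honest options are to cite \cite[Thm 96.5]{zaanen1983} as the paper does, or to reproduce one of the substantially more involved known proofs.
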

\begin{proof}
This follows from Bukhvalov's theorem \cite[Thm 96.5]{zaanen1983} and the identification of 
concrete and abstract kernel operators \cite[Prop IV 9.8]{schaefer1974}.
\end{proof}

\begin{thm}
\label{thm:ultrafellerkernel}
	Let $T\in\cL(\cM(\Omega))$ be an ultra Feller operator.
	Then $T$ is a kernel operator.
\end{thm}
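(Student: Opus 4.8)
The plan is to reduce the abstract band-membership assertion to Bukhvalov's concrete criterion, Theorem~\ref{thm:bukhvalov}, by compressing $T$ to the $L^1$-bands sitting inside $\cM(\Omega)$, and then to read off the decisive almost-everywhere convergence from the equi-continuity built into the ultra Feller property.

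First I would fix a finite measure $\mu\ge 0$ and set $\nu\coloneqq T\mu$. Since $T$ is positive, $0\le\sigma\le c\mu$ forces $0\le T\sigma\le c\nu$, so $T$ maps the band $L^1(\mu)\subset\cM(\Omega)$ into the band $L^1(\nu)$; write $T_\mu\colon L^1(\mu)\to L^1(\nu)$ for this compression. A short duality computation identifies its adjoint $T_\mu^*\colon L^\infty(\nu)\to L^\infty(\mu)$: for $g\in L^\infty(\nu)$ with bounded Borel representative $\tilde g$ one has $T_\mu^* g=T'\tilde g$ in $L^\infty(\mu)$, because $\applied{T_\mu h}{g}=\applied{\tilde g}{T(h\mu)}=\applied{T'\tilde g}{h\mu}$ for every density $h$. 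The key point is that $T_\mu$ is a concrete kernel operator if and only if $T_\mu^*$ is, so by Theorem~\ref{thm:bukhvalov} it suffices to prove that $T'\tilde g_n\to 0$ $\mu$-almost everywhere whenever $(g_n)$ is bounded in $L^\infty(\nu)$ and $\lVert g_n\rVert_{L^1(\nu)}\to 0$.

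This is where the hypothesis enters, and it is the heart of the argument. Choosing representatives with $\lvert\tilde g_n\rvert\le c\mathds 1$, the functions $\phi_n\coloneqq T'\tilde g_n$ are continuous, uniformly bounded by $c$, and---since $T$ is ultra Feller---equi-continuous. For any Borel set $A$ the measure $T(\mathds 1_A\mu)$ satisfies $0\le T(\mathds 1_A\mu)\le\nu$, hence has density $\rho_A\le\mathds 1$ with respect to $\nu$, and therefore
\[
	\Bigl\lvert \int_A \phi_n \,\dx\mu\Bigr\rvert
	= \Bigl\lvert \int_\Omega \tilde g_n\,\rho_A\,\dx\nu\Bigr\rvert
	\le \lVert g_n\rVert_{L^1(\nu)} \longrightarrow 0 .
\]
Together with the uniform bound this gives $\int \phi_n h\,\dx\mu\to 0$ for every $h\in L^1(\mu)$, i.e.\ $\phi_n\to 0$ in $\sigma(L^\infty(\mu),L^1(\mu))$. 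Now I would upgrade this to pointwise convergence on $\supp\mu$: given $x\in\supp\mu$ and $\eps>0$, equi-continuity provides a neighbourhood $U$ of $x$ with $\lvert\phi_n(y)-\phi_n(x)\rvert<\eps$ for all $y\in U$ and all $n$; since $\mu(U)>0$, testing against $h=\mu(U)^{-1}\mathds 1_U$ yields $\limsup_n\lvert\phi_n(x)\rvert\le\eps$. As $\mu(\Omega\setminus\supp\mu)=0$, this is exactly $\phi_n\to 0$ $\mu$-almost everywhere, so $T_\mu$ is a kernel operator.

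It remains to pass from these local kernel operators to the global statement that $T$ lies in the band $\cK\coloneqq(\cM(\Omega)^*_{\mathrm{oc}}\otimes\cM(\Omega))^{\bot\bot}$. Since $\cL(\cM(\Omega))$ is order complete, $T$ admits a band decomposition $T=T_k+T_s$ with $T_k\in\cK$ and $T_s\in\cK^\bot$, both positive. For each $\mu$ the compression $(T_s)_\mu$ satisfies $0\le(T_s)_\mu\le T_\mu$, so it is a concrete kernel operator because the kernel operators form an order ideal. On the other hand, using that $\cK$ is also an ideal under composition, every concrete kernel operator on $L^1(\mu)\to L^1(\nu)$ lifts through the band projections to an element of $\cK$, and disjointness from $\cK$ is inherited by the compression; hence $(T_s)_\mu$ is disjoint from all concrete kernel operators and therefore vanishes. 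As every $\sigma\in\cM(\Omega)$ belongs to $L^1(\lvert\sigma\rvert)$, I conclude $T_s=0$ and $T=T_k\in\cK$. I expect this globalization---reflecting disjointness through the compressions---to be the main technical obstacle, whereas the substantive use of the ultra Feller hypothesis is the equi-continuity argument of the preceding paragraph.
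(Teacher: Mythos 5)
Your proposal is correct and follows essentially the same route as the paper: compress $T$ to the bands $\{\mu\}^{\bot\bot}\cong L^1(\Omega,\mu)$, identify $T_\mu^*[f]=[T'f]$, verify Bukhvalov's criterion (Theorem \ref{thm:bukhvalov}) for $T_\mu^*$ using the equi-continuity supplied by the ultra Feller property, and then globalize over all $\mu\in\cM(\Omega)_+$. The differences are only in execution---you deduce the almost-everywhere convergence directly from $\sigma(L^\infty(\mu),L^1(\mu))$-convergence plus equi-continuity where the paper argues by contradiction along a subsequence, you quote the concrete-kernel duality between $T_\mu$ and $T_\mu^*$ where the paper proves the needed direction by a net argument, and you globalize via the band decomposition $T=T_k+T_s$ where the paper writes $T=\sup\{TP_\mu : \mu\in\cM(\Omega)_+\}$---but these rest on the same underlying facts and do not constitute a different method.
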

\begin{proof}
	Let $\mu \in \cM(\Omega)_+$ and $\nu \coloneqq T\mu$,
	then $T \{ \mu\}^{\bot\bot} \subset \{\nu\}^{\bot\bot}$.
	Let us denote by $T_\mu$ the restriction of $T$ to $\{\mu\}^{\bot\bot}$.
	By the Radon-Nikodym theorem, $\{\mu\}^{\bot\bot}$ and $\{\nu\}^{\bot\bot}$ 
	are isometrically isomorphic to $L^1(\Omega,\mu)$ and $L^1(\Omega,\nu)$.
	Thus, we may consider $T_\mu$ as an operator from $L^1(\Omega,\mu)$ to $L^1(\Omega,\nu)$ and we prove that 
	$T_\mu^* : L^\infty(\Omega,\nu)\to L^\infty(\Omega,\mu)$ is a kernel
	operator by applying Bukhvalov's theorem in the version of Theorem \ref{thm:bukhvalov}.
	It is easy to check that
	\[ T_\mu^* [f] = [T' f]\]
	for every $f\in B_b(\Omega)$, where $[f]$ denotes the equivalent class of $f$ in $L^\infty(\Omega,\mu)$.

	Let $(f_n) \subset  L^\infty(\Omega,\nu)$ be a bounded sequence such that
	$\lim \lVert f_n \rVert_{L^1(\Omega,\nu)} =0$.
	By choosing representatives we may assume that
	every $f_n$ is a bounded measurable function. Moreover, we may assume that each $f_n$ vanishes on  
	$\Omega\setminus \supp (\nu)$.  Then
	\[0= \lim \applied{f_n}{\nu}=\lim \applied{T_\mu^*f_n}{\mu} = \lim \applied{T_\mu'f_n}{\mu}.\]
	Let $\omega \in \Omega$ such that $(T_\mu'f_n)(\omega)$ does not converge to $0$. Then there exists $\eps>0$
	and a subsequence $(f_{n_k})$ of $(f_n)$ such that $T_\mu'f_{n_k}(\omega) \geq \eps$ for all $k\in\N$.
	By the ultra Feller property of $T_\mu$, the family
	\[ \{ T_\mu'f_{n_k} : k\in \N \} \]
	is equi-continuous. Therefore, we find an open neighborhood $U$ of $\omega$ such that $(T_\mu'f_{n_k})(s)\geq \eps/2$
	for all $s\in U$ and $k\in\N$. Now we conclude from
	\[ \mu(U)\frac{\eps}{2} \leq \int_\Omega T_\mu'f_{n_k} \dx\mu \to 0 \quad (k\to\infty)\]
	that $\mu(U)=0$ and hence $U \subset \Omega\setminus \supp(\mu)$.
	This proves that $(T_\mu' f_n)(\omega)$ converges to $0$ for all $\omega\in \supp(\mu)$ and hence almost everywhere.
	Thus, 
	it follows from Theorem \ref{thm:bukhvalov} that
	\[ T_\mu^*\in (L^\infty(\Omega,\nu)^*_{\text{oc}} \otimes L^\infty(\Omega,\mu))^{\bot\bot}.\]
	By \cite[Prop 1.4.15]{meyer1991}, the order continuous functionals on $L^\infty(\Omega,\nu)$ are precisely $L^1(\Omega,\nu)$.
	Thus, $T_\mu^*\in (L^1(\Omega,\nu) \otimes L^\infty(\Omega,\mu))^{\bot\bot}$.

	Now we prove that
	$T_\mu \in (L^\infty(\Omega,\mu)\otimes L^1(\Omega,\nu))^{\bot\bot}$. Let $0\leq S_\alpha\leq T_\mu^*$, $\alpha\in\Lambda$,
	be an upwards directed net and $R_\alpha \in L^1(\Omega,\nu)\otimes L^\infty(\Omega,\mu)$ such that $\sup S_\alpha=T_\mu$
	and $S_\alpha \leq R_\alpha$ for all $\alpha\in\Lambda$.
	Then $S_\alpha^* \leq R_\alpha^* \in L^\infty(\Omega,\mu)\otimes L^1(\Omega,\nu)$ for all $\alpha\in\Lambda$. 
	Since $L^1(\Omega,\nu)$ is an
	ideal in the dual of $L^\infty(\Omega,\nu)$, we obtain that 
	\[ {S_\alpha^*}\mid_{L^1(\Omega,\mu)} : L^1(\Omega,\mu) \to L^1(\Omega,\nu).\]
	Now it follows from
	\[ \sup \applied{(T_\mu-S^*_\alpha)f}{g} = \sup \applied{f}{(T_\mu^*-S_\alpha)g} = 0\]
	for all $f\in L^1(\Omega,\mu)$ and $g\in L^\infty(\Omega,\nu)$ that $T_\mu = \sup S^*_\alpha$
	and therefore 
	\[T\mid_{ L^1(\Omega,\mu)} = T_\mu \in (L^\infty(\Omega,\mu)\otimes L^1(\Omega,\nu))^{\bot\bot}.\]
	It follows that $TP_\mu \in (\cM(\Omega)^*\otimes \cM(\Omega))^{\bot\bot}$ for every $\mu\in \cM(\Omega)_+$
	where $P_\mu$ denotes the band projection onto $\{\mu\}^{\bot\bot}$. Thus,
	\[ T = \sup \{ TP_\mu : \mu \in \cM(\Omega)_+ \} \]
	belongs to $(\cM(\Omega)^*\otimes \cM(\Omega))^{\bot\bot}$ which completes the proof.
\end{proof}

\section{Stability of ergodic strong Feller semigroups}
\label{sec:main}

A \emph{Markovian semigroup} on $\cM(\Omega)$ is a family
$\cT=(T(t))_{t\geq 0} \subset \cL(\cM(\Omega))$ of Markovian 
operators on $\cM(\Omega)$ such that $T(t+s) = T(t)T(s)$ for all $t,s\geq 0$ and $T(0)=I$.
A Markovian semigroup is called 
\emph{stochastically continuous} if $t\mapsto \applied{T(t)\mu}{f}$ is
continuous for all $f\in C_b(\Omega)$ and $\mu \in \cM(\Omega)$.

Throughout, let $\cT=(T(t))_{t\geq 0}$ be a stochastically continuous Markovian semigroup.

It follows from \cite[Thm 6.2]{kunze2011} that $\cT$ is \emph{integrable} in the sense of \cite[Def 5.1]{kunze2011}.
In particular, by \cite[Thm 5.8]{kunze2011}, for every $t>0$ there exists a Markovian operator
$A_t \in \cL(\cM(\Omega))$ satisfying
	\[ \applied{A_t \mu}{f} = \frac{1}{t} \int_0^t \applied{T(s)\mu}{f} \dx s\]
	for all $\mu\in \cM(\Omega)$ and $f\in B_b(\Omega)$.
We call the semigroup $\cT$ $B_b$-\emph{ergodic}
	if $\lim_{t\to\infty} A_t \mu$ exists in the $\sigma(\cM(\Omega),B_b(\Omega))$-topology 
	for all $\mu\in\cM(\Omega)$.

The following proposition ensures that for every initial distribution the part on the disjoint
complement of $\fix(\cT)$ converges to zero 
if $\cT$ is $B_b$-ergodic and eventually strong Feller.

\begin{prop}
	\label{prop:stabilityonfixbot}
	Let $P$ denote the band projection onto $\fix(\cT)^{\bot}$. 
	If $\cT$ is $B_b$-ergodic and $T(t_0)$ is strong Feller for some $t_0>0$,
	then \[\lim_{t\to\infty} PT(t)\mu = 0\] for all $\mu \in \cM(\Omega)$.
\end{prop}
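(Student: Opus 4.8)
The plan is to reduce everything to the scalar, monotone quantity $t\mapsto\lVert PT(t)\mu\rVert$ for positive $\mu$ and to show that this non-increasing function has vanishing Cesàro mean. First I would use positivity of the band projection $P$ and of the operators $T(t)$ to reduce to $\mu\in\cM(\Omega)_+$, so that $PT(t)\mu\geq0$ and $\lVert PT(t)\mu\rVert=\applied{\mathds 1}{PT(t)\mu}$. By $B_b$-ergodicity the averages $A_t\mu$ converge in $\sigma(\cM(\Omega),B_b(\Omega))$ to some measure $Q\mu$; the standard mean ergodic estimate $\lVert T(r)A_t\mu-A_t\mu\rVert\to0$ together with the weak continuity of the Markovian operators $T(r)$ (Lemma \ref{lem:weaklycont}) shows $Q\mu\in\fix(\cT)$, whence $PQ\mu=0$ because $\fix(\cT)\subset\fix(\cT)^{\bot\bot}=\kernel P$.

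Next I would prove that $t\mapsto\lVert PT(t)\mu\rVert$ is non-increasing, the key point being that the band $\fix(\cT)^{\bot\bot}$ is invariant under the semigroup. Indeed, for a positive invariant measure $\rho$ and $\sigma\ll\rho$ one checks, using $\rho=T(r)\rho=\int k_r(x,\cdot)\,\dx\rho(x)$, that every $\rho$-null set is $k_r(x,\cdot)$-null for $\rho$-almost every $x$, so $T(r)\sigma\ll\rho$; hence each band $\{\rho\}^{\bot\bot}$ is invariant, and since $\fix(\cT)$ is a sublattice and $T(r)$ is order continuous, so is the band they generate. Consequently $PT(r)(I-P)=0$, and for $\mu\geq0$ and $r\geq0$ this yields $PT(t+r)\mu=PT(r)\,PT(t)\mu$, so that $\lVert PT(t+r)\mu\rVert\leq\lVert PT(r)\rVert\,\lVert PT(t)\mu\rVert\leq\lVert PT(t)\mu\rVert$ by Markovianity and contractivity of the band projection. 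Thus the limit $\ell\coloneqq\lim_{t\to\infty}\lVert PT(t)\mu\rVert$ exists, and the same limit is attained by the Cesàro means $\tfrac1t\int_0^t\lVert PT(s)\mu\rVert\,\dx s=\lVert PA_t\mu\rVert$, where the last equality uses additivity of the norm on the positive cone.

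It therefore remains to show $\ell=0$, i.e.\ $\lVert PA_t\mu\rVert\to0$, and here the strong Feller hypothesis enters. Writing $S\coloneqq T(2t_0)=T(t_0)^2$, the operator $S$ is ultra Feller as a product of two strong Feller operators, hence a kernel operator by Theorem \ref{thm:ultrafellerkernel}; since the kernel operators form a band and $\lvert PS\rvert\leq\lvert S\rvert$, also $K\coloneqq PS$ is a kernel operator. A boundary estimate gives $\lVert SA_t\mu-A_t\mu\rVert\leq\tfrac{4t_0}{t}\lVert\mu\rVert\to0$, and combining this with $SQ\mu=Q\mu$ and $PQ\mu=0$ I obtain
\[ PA_t\mu = K(A_t\mu-Q\mu)+R_t,\qquad \lVert R_t\rVert\to0. \]
Thus $\ell=0$ will follow once I know that the kernel operator $K$ sends the bounded sequence $A_{t_n}\mu-Q\mu$, which tends to $0$ in $\sigma(\cM(\Omega),B_b(\Omega))$ along any $t_n\to\infty$, to a norm-null sequence.

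The main obstacle is precisely this last implication: that a kernel operator maps bounded, $\sigma(\cM(\Omega),B_b(\Omega))$-null sequences to norm-null sequences. I expect to establish it by dominating the countably many measures $A_{t_n}\mu$ and $Q\mu$ by a single finite measure $\rho$ and passing to $L^1(\Omega,\rho)$; there, $\sigma(\cM(\Omega),B_b(\Omega))$-convergence to $0$ becomes weak convergence to $0$, since $L^\infty(\Omega,\rho)$ embeds into $B_b(\Omega)$. The restriction of $K$ to this band is again a kernel operator into an $L$-space and is therefore $L$-weakly compact, in particular weakly compact; as $L^1(\Omega,\rho)$ has the Dunford--Pettis property, weakly null sequences are mapped to norm-null sequences, which is what is needed. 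The delicate technical point is to justify the weak compactness of the restricted operator and the compatibility of $P$ with the passage to $L^1(\Omega,\rho)$; this is where Bukhvalov's theorem (Theorem \ref{thm:bukhvalov}) and the order continuity of the norm on $\cM(\Omega)$ will do the work. Once $\ell=0$ is secured, the monotonicity from the second step gives $\lVert PT(t)\mu\rVert\to0$ for $\mu\geq0$, and linearity finishes the proof.
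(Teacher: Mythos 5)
The decisive step of your plan is the identity $\frac{1}{t}\int_0^t \lVert PT(s)\mu\rVert\,\dx s = \lVert PA_t\mu\rVert$, and it is false. Additivity of the total variation norm on the positive cone would justify it only if $P$ commuted with the weak integral defining $A_t$, i.e.\ if $PA_t\mu$ were the $\sigma(\cM(\Omega),B_b(\Omega))$-integral of $s\mapsto PT(s)\mu$; but a band projection is only norm and order continuous, not $\sigma(\cM(\Omega),B_b(\Omega))$-continuous, and bands are not stable under these weak integrals. The paper's own remark after the proposition supplies the counterexample: for the rotation group on the circle with $\mu=\delta_x$ one has $PT(s)\mu = \delta_{x+s}$, so $\lVert PT(s)\mu\rVert \equiv 1$, while $A_t\mu$ is normalized arc length, absolutely continuous with respect to the invariant Lebesgue measure, whence $PA_t\mu = 0$ for all $t>0$. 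Note that nothing in your argument up to and including this identity uses the strong Feller hypothesis, so it must fail: it is exactly the point where the rotation group slips through. Consequently, even a complete proof of your final step would only give $\lim_{t\to\infty}\lVert PA_t\mu\rVert = 0$, which yields no upper bound on $\ell = \lim_{t\to\infty}\lVert PT(t)\mu\rVert$; the only comparison between the two quantities one can salvage is $\lVert PA_t\mu\rVert \lesssim \frac{1}{t}\int_0^t\lVert PT(s)\mu\rVert\,\dx s$, which points in the useless direction.

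A secondary flaw: your justification of the last lemma via weak compactness is also incorrect, since a kernel operator into an $L$-space need not be $L$-weakly or even weakly compact (the identity on $\ell^1$ lies in the band generated by the finite rank operators), so Dunford--Pettis cannot be invoked this way. The weak-to-norm statement itself --- a positive kernel operator between $L^1$-spaces of finite measures maps weakly null sequences to norm null ones --- is in fact true and provable from the concrete kernel representation (truncate the sequence, use uniform integrability, domination by $T\mathds{1}$ and dominated convergence), but it cannot repair the primary gap above. By contrast, the paper proves the proposition with no kernel operator machinery at all: setting $\alpha = \lim_{t\to\infty}\lVert PT(t)\mu\rVert$ and letting $\tilde\nu\in\fix(\cT)$ be the ergodic limit of $\nu \coloneqq PT(t_1)\mu$ with $\lVert\nu\rVert$ close to $\alpha$, it chooses for each $t\geq t_0$ a Borel set $B$ carrying $PT(2t)\nu$ but not $\tilde\nu$, uses the strong Feller property to see that $f \coloneqq T'(t)\mathds{1}_B$ is continuous with $f \geq \mathds{1}_A$ for the \emph{fixed} set $A = \supp\tilde\nu$, and deduces $\applied{T(t)\nu}{\mathds{1}_A} < \alpha/2$ for all $t\geq t_0$, contradicting $\applied{\tilde\nu}{\mathds{1}_A}\geq\alpha$ through the Ces\`aro average. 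That support argument is where the strong Feller hypothesis genuinely enters --- precisely at the junction your proposal tries to cross with the false commutation identity.
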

\begin{proof}
	First note that, since $\fix(\cT)^{\bot\bot}$ is $\cT$-invariant, $R(t) \coloneqq PT(t)$ defines a semigroup.
	Obviously, every operator $R(t)$ is positive and contractive.
	Fix $\mu \in \cM(\Omega)_+$ and let
	\[ \alpha \coloneqq \lim_{t\to\infty} \lVert PT(t) \mu \rVert = \inf_{t\geq 0} \lVert R(t)\mu  \rVert. \]
	We pick $t_1>0$ such that $\nu \coloneqq PT(t_1)\mu$ satisfies $\lVert \nu \rVert < \alpha+\frac{\alpha}{2}$.
	Since $\cT$ is $B_b$-ergodic,
	$\lim A_t \nu \eqqcolon \tilde\nu$ exists with respect to the $\sigma(\cM(\Omega),B_b(\Omega))$-topology
	and $\tilde \nu \in \fix(\cT)$ by \cite[Lem 4.5]{gerlach2013}.
	In particular,
	\[\applied{\tilde\nu}{\mathds{1}} = \lim_{t\to\infty} \frac{1}{t} \int_0^t \applied{T(s)\nu}{\mathds{1}}\dx s = \lVert\nu\rVert \geq \alpha.\]
	Let $t\geq t_0$. As $PT(2t)\nu$ and $\nu$ are disjoint, there exists a Borel set $B \subset \Omega$ such that 
	\[(PT(2t)\nu)(B) = \tilde \nu (\Omega\setminus B)=0.\]
	Since $\lVert T(2t)\nu \rVert \leq \lVert \nu\rVert < \alpha+\frac{\alpha}{2}$ and
	\[ \lVert PT(2t)\nu\rVert = \lVert R(t_1+2t)\mu \rVert \geq \alpha\]
	it follows from the additivity of the total variation norm that
	$\lVert (I-P)T(2t)\nu \rVert < \frac{\alpha}{2}$.
	Hence, $(T(2t)\nu)(B) < \frac{\alpha}{2}$. Let $f\coloneqq T'(t)\mathds{1}_B$ and $g\coloneqq T'(t)\mathds{1}_{\Omega\setminus B}$.	
	Since $T(t)=T(t-t_0)T(t_0)$ is strong Feller and Markovian, $f,g \in C_b(\Omega)_+$ and $f+g = \mathds{1}$.
	It follows from $\applied{\tilde \nu}{g}=0$ that 
	\[ A \coloneqq \supp\tilde\nu \subset \{ g=0 \} = \{ f=1\},\]
	i.e. $\mathds{1}_A \leq f$. Thus,
	\[ \applied{T(t)\nu}{\mathds{1}_A} \leq \applied{T(t)\nu}{f} = \applied{T(2t)\nu}{\mathds{1}_B} < \frac{\alpha}{2}.\]
	Since $t\geq t_0$ was arbitrary, we conclude that
	$\applied{T(t)\nu}{\mathds{1}_A} < \frac{\alpha}{2}$ for all $t\geq t_0$ and hence
	\[ \alpha = \applied{\tilde\nu}{\mathds{1}_A} = \lim_{t\to\infty} \frac{1}{t} \int_0^t \applied{T(s)\nu}{\mathds{1}_A} \leq \frac{\alpha}{2}.\]
	Thus, $\alpha=0$.
\end{proof}

\begin{rem}
	The assumption that $\cT$ is eventually strong Feller cannot be dropped in Proposition \ref{prop:stabilityonfixbot}. 
	A counterexample is given by the rotation group on the Borel measures on the unit circle.

	Note that the situation is different for time-discrete semigroups.
	If $T$ is a positive and mean ergodic contraction on an $L$-space $E$ and $P$ is the band projection onto $\fix(T)^\bot$, 
	then it is possible to prove that 
	\[\lim_{n \to\infty} \lVert P T^n x \rVert = 0\]
	 for all $x\in E$.
\end{rem}

Our main tool for the proof of the desired Tauberian theorem 
is the following result from \cite[Thm 4.2]{gerlach2012b},
a generalized version of \cite[Kor 3.11]{greiner1982}.
We recall that a family $\cS = (S(t))_{t\geq 0} \subset \cL(E)$ of positive operators on a Banach lattice $E$
is called a \emph{positive strongly continuous semigroup} if $S(t)S(s) = S(t+s)$ for all $t,s\geq 0$, $S(0)=I$ and
the mapping $t\mapsto S(t)x$ is continuous for all $x\in E$.
A positive strongly continuous semigroup $\cS = (S(t))_{t\geq 0}$ is called \emph{irreducible} if $\{0\}$ and $E$ 
are the only closed ideals in $E$ that are invariant under the action of every operator $S(t)$.

\begin{thm}[Greiner]
	\label{thm:greiner}
	Let $\cS = (S(t))_{t\geq 0} \subset \cL(E)$ be a positive, bounded, irreducible and strongly continuous semigroup on a Banach lattice $E$
	with order continuous norm such that $\fix(\cS) \not= \{0\}$.
	If $S(t_0)\in (E^*_{\text{oc}}\otimes E)^{\bot\bot}$ for some $t_0>0$, then
	there exists a positive $z^* \in \fix(\cS^*)$ and a positive $z \in \fix(\cS)$ of $E_+$
	such that
	\[ \lim_{t\to\infty} S(t)x = \applied{z^*}{x} z\]
	for all $x\in E$.
\end{thm}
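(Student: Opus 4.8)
The plan is to reconstruct the classical Perron--Frobenius argument for positive semigroups, using the kernel operator hypothesis precisely to eliminate nontrivial peripheral spectrum and thereby promote the resulting mean ergodicity to genuine strong convergence. First I would analyse the fixed space via the resolvent. Since $\cS$ is bounded with $\fix(\cS)\neq\{0\}$, the point $0$ is a first-order pole of the resolvent $R(\lambda,A)$ of the generator $A$ of $\cS$ (a higher-order pole would contradict boundedness), and the residue $P$ at $0$ is the spectral projection onto $\fix(\cS)$. Here irreducibility enters through the Perron--Frobenius theory of positive operators: the projection onto the fixed space of an irreducible positive semigroup is strictly positive and of rank one, so $Px=\applied{z^*}{x}\,z$ for a quasi-interior $z\in\fix(\cS)\cap E_+$ and a strictly positive $z^*\in\fix(\cS^*)$, normalised by $\applied{z^*}{z}=1$. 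That $E$ has order continuous norm guarantees $E^*=E^*_{\text{oc}}$, so the band and lattice structure needed for these arguments is available on the dual as well. It then remains to prove $S(t)x\to Px$ for every $x\in E$.

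The decisive use of the kernel hypothesis is to show that $A$ has no eigenvalue on the imaginary axis other than $0$. Suppose $Af=i\beta f$ with $\beta\neq0$, so that $S(t)f=e^{i\beta t}f$ for all $t$. The standard modulus argument for irreducible positive semigroups shows that $\lvert f\rvert$ is a positive fixed vector, hence $\lvert f\rvert=z$ up to scaling, and that the unimodular multiplier $u\coloneqq f/z$ produces the intertwining relation
\[ M_u^{-1}\,S(t_0)\,M_u = e^{i\beta t_0}\,S(t_0), \]
where $M_u$ denotes multiplication by $u$. Now I would invoke that $S(t_0)$ is a kernel operator: conjugation by the lattice isometry $M_u$ preserves the band generated by the finite-rank operators, and working through the local $L^1$--$L^\infty$ representation of $S(t_0)$ as in the proof of Theorem \ref{thm:ultrafellerkernel} and applying Bukhvalov's criterion (Theorem \ref{thm:bukhvalov}), the intertwining relation forces the representing kernel to satisfy $\overline{u}(x)\,u(y)=e^{i\beta t_0}$ on its support. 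Irreducibility makes this support essentially all of $\Omega\times\Omega$, so $u$ must be constant and $e^{i\beta t_0}=1$. As the kernel operators form a ring ideal in the regular operators, $S(t)=S(t-t_0)S(t_0)$ is again a kernel operator for every $t\ge t_0$, whence the same argument yields $e^{i\beta t}=1$ for all $t\ge t_0$ and therefore $\beta=0$. The identical computation for the adjoint shows that, up to scaling, $z^*$ is the only eigenvector of $\cS^*$ belonging to an eigenvalue on $i\R$.

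Finally I would upgrade this spectral information to strong convergence on the $\cS$-invariant hyperplane $E_0\coloneqq\kernel P=\{x:\applied{z^*}{x}=0\}$, on which it suffices to prove $S(t)x\to0$; on the complementary line $\R z$ the semigroup is the identity. By the previous step the generator of $\cS|_{E_0}$ has empty point spectrum on $i\R$ and, dually, the restricted adjoint semigroup has no eigenvector on $i\R$. I would then apply the Arendt--Batty--Lyubich--V\~u stability theorem, whose remaining hypothesis---countability of $\sigma(A)\cap i\R$---is to be supplied by the pole structure that the kernel operator $S(t_0)$ imposes on the peripheral spectrum of an irreducible positive semigroup. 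This yields $S(t)|_{E_0}\to0$ strongly and hence $S(t)x\to\applied{z^*}{x}z$ for all $x\in E$.

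The main obstacle is this last step: spectral information alone does not force stability of a bounded semigroup on a non-reflexive lattice such as $\cM(\Omega)$, so the crux is to extract from the kernel operator hypothesis enough regularity of the boundary spectrum---that it consists only of poles, and in particular meets $i\R$ in a countable (here, on $E_0$ even empty) set---to bring a Tauberian stability theorem to bear. Verifying that membership of $S(t_0)$ in the band generated by the finite-rank operators delivers exactly this peripheral-spectrum structure, and not merely the absence of imaginary eigenvalues established in the second step, is where the real work lies.
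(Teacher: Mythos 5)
Your outline is a genuinely different (spectral-theoretic) route from the one the paper relies on --- the paper does not prove Theorem \ref{thm:greiner} internally but cites \cite[Thm 4.2]{gerlach2012b} --- and as it stands it contains two concrete gaps, one fatal to the middle step and one, which you flag yourself, fatal to the last. The middle gap: irreducibility does \emph{not} make the support of the representing kernel of $S(t_0)$ essentially all of $\Omega\times\Omega$. A cyclic permutation matrix on a finite set is an irreducible kernel operator (in finite dimensions every operator lies in the band generated by the finite rank operators) whose kernel is supported on the graph of a permutation, and it has all $n$-th roots of unity as unimodular eigenvalues; so for a \emph{single} irreducible kernel operator your conclusion $e^{i\beta t_0}=1$ is false, and the relation $\overline{u}(x)u(y)=e^{i\beta t_0}$ on the support of the kernel yields only the classical cyclic (Frobenius) decomposition, not constancy of $u$. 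Any correct argument must use that $i\beta$ is an eigenvalue of the \emph{generator}, i.e.\ that $e^{i\beta t}$ is an eigenvalue of $S(t)$ for every $t\geq 0$ simultaneously; your appeal to ``the same argument for $t\geq t_0$'' comes after the flawed single-operator step and cannot repair it. A smaller gap of the same kind occurs at the start: boundedness of $\cS$ together with $\fix(\cS)\neq\{0\}$ does not make $0$ an isolated point of $\sigma(A)$, let alone a pole of the resolvent. The rank-one strictly positive projection should instead be produced by mean ergodicity, which is available here because $E$ has order continuous norm, so order intervals are weakly compact, and a quasi-interior fixed point $z$ confines the orbit of every $x\in[0,z]$ to $[0,z]$.

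The decisive unresolved gap is the final step: the Arendt--Batty--Lyubich--V\~u theorem needs countability of $\sigma(A)\cap i\R$, and membership of $S(t_0)$ in $(E^*_{\text{oc}}\otimes E)^{\bot\bot}$ provides no such regularity --- kernel operators need not be quasi-compact, and nothing forces the boundary spectrum to consist of poles or to be countable. You correctly identify this as ``where the real work lies,'' but that work is not done, and on a non-reflexive $L$-space such as $\cM(\Omega)$ spectral data of the kind actually available does not imply stability. The cited proof (refining Greiner's Kor 3.11) avoids ABLV entirely: it uses the relative weak compactness of orbits just described to apply the Jacobs--de Leeuw--Glicksberg splitting $E=E_r\oplus E_s$, identifies the reversible part $E_r$ with the closed span of the unimodular eigenvectors, collapses it to the span of $z$ by the triviality of the peripheral \emph{point} spectrum (this is where the kernel hypothesis and irreducibility do their work, via the full time axis), and then uses the kernel hypothesis once more to upgrade the weak flight behavior on $E_s$ to strong convergence to $0$. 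So the correct mechanism is compactness-plus-splitting rather than a Tauberian spectral theorem; your first step and the signum/multiplier formalism are sound ingredients, but both the elimination of imaginary eigenvalues and the passage to strong convergence need to be rebuilt along these lines.
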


The following theorem shows that, if the semigroup $\cT$ contains a kernel operator, 
every principal band $\{\mu\}^{\bot\bot}$ spanned by an invariant measure $\mu$ can be decomposed into
countably many invariant bands such that the restriction of $\cT$ to each of them is irreducible.

\begin{thm}
	\label{thm:irreddecomp}
	Let $\mu\in \cM(\Omega)$ be a positive $\cT$-invariant measure. If $T(t_0)$ is a kernel operator, 
	then there exist at most countably many disjoint $\cT$-invariant measures $\{ \mu_n \} \subset \cM(\Omega)_+$
	such that $\mu =  \mu_1+\mu_2+\dots$ and the restriction of $\cT$ to each $\{\mu_n\}^{\bot\bot}$ is 
	irreducible.

\end{thm}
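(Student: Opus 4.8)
The plan is to realise the invariant band $\{\mu\}^{\bot\bot}$ as $L^1(\Omega,\mu)$ via Radon--Nikodym, exactly as in the proof of Theorem \ref{thm:ultrafellerkernel}, and to reduce the claim to decomposing a sub-$\sigma$-algebra of $\cB(\Omega)$ into its atoms. To this end I would introduce
\[ \Sigma \coloneqq \{\, B\in\cB(\Omega) : \mathds{1}_B\mu \in \fix(\cT) \,\} \]
(modulo $\mu$-null sets), the sets carrying a $\cT$-invariant measure, and first check that $\Sigma$ is a sub-$\sigma$-algebra. Closure under complementation is immediate from $T(t)\mu=\mu$. For closure under intersection, note that for $A,B\in\Sigma$ both $\{\mathds{1}_A\mu\}^{\bot\bot}$ and its disjoint complement are $\cT$-invariant, so the band projection $P_A$ onto $\{\mathds{1}_A\mu\}^{\bot\bot}$ commutes with every $T(t)$, whence $T(t)\mathds{1}_{A\cap B}\mu = P_A T(t)\mathds{1}_B\mu = P_A\mathds{1}_B\mu = \mathds{1}_{A\cap B}\mu$. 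Closure under countable increasing unions follows from the order continuity of the norm on $\cM(\Omega)$.

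Next I would identify the irreducible components with the atoms of $\Sigma$. Using mass preservation one sees that a band $\{\mathds{1}_B\mu\}^{\bot\bot}$ is $\cT$-invariant exactly when $B\in\Sigma$: if $T(t)$ leaves the band invariant then $T(t)\mathds{1}_B\mu$ is carried by $B$ and has total mass $\mu(B)$, which together with $T(t)\mu=\mu$ and positivity forces $T(t)\mathds{1}_B\mu=\mathds{1}_B\mu$. Since the closed ideals of $L^1(\Omega,\mu)$ are precisely the bands $\{\mathds{1}_B\mu\}^{\bot\bot}$, the closed invariant ideals of the restriction of $\cT$ to $\{\mathds{1}_A\mu\}^{\bot\bot}$ are exactly those with $B\in\Sigma$, $B\subseteq A$; hence this restriction is irreducible if and only if $A$ is an atom of $\Sigma$. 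As $\mu$ is finite, distinct atoms are disjoint and carry disjoint positive mass, so there are at most countably many atoms $A_1,A_2,\dots$, and $\mu_n\coloneqq\mathds{1}_{A_n}\mu$ are the desired disjoint invariant measures with irreducible restrictions. It then remains to prove that $N\coloneqq\Omega\setminus\bigcup_n A_n$ is $\mu$-null, i.e.\ that $\Sigma$ has no non-atomic part.

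This last step is the heart of the matter and the place where the kernel hypothesis enters. Since $T(t_0)\mu=\mu$, the restriction of $T(t_0)$ to $\{\mu\}^{\bot\bot}$, read on $L^1(\Omega,\mu)$, is again a kernel operator, so its adjoint $T'(t_0)$ is a kernel operator on $L^\infty(\Omega,\mu)$; moreover a short duality computation from $T(t)\mathds{1}_B\mu=\mathds{1}_B\mu$ (testing against $\mathds{1}_{B^c}$ and $\mathds{1}_B$) gives $T'(t_0)\mathds{1}_B=\mathds{1}_B$ $\mu$-a.e.\ for every $B\in\Sigma$. Assume for contradiction that $\mu(N)>0$. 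As $\Sigma$ has no atom below $N$, the restriction $\Sigma|_N$ is non-atomic, so by Sierpi\'nski's theorem I can, for each $k$, partition $N$ into $\Sigma$-sets $N=C_{k,1}\sqcup\dots\sqcup C_{k,2^k}$ of equal measure $\mu(N)2^{-k}$. Enumerating these pieces level by level as a single sequence $(B_n)\subset\Sigma$ yields $\lVert\mathds{1}_{B_n}\rVert_{L^1(\mu)}\to 0$ while every point of $N$ lies in one piece of each level, hence in infinitely many $B_n$, so $\limsup_n\mathds{1}_{B_n}=\mathds{1}_N$. Then $T'(t_0)\mathds{1}_{B_n}=\mathds{1}_{B_n}$ fails to converge to $0$ $\mu$-almost everywhere, contradicting Theorem \ref{thm:bukhvalov} applied to the kernel operator $T'(t_0)$. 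Thus $\mu(N)=0$ and $\mu=\sum_n\mu_n$. I expect the main obstacle to be exactly this reduction: confirming that the kernel property survives restriction to the invariant band and passage to the adjoint, and engineering a bounded, $L^1$-null sequence of invariant indicators whose images under $T'(t_0)$ violate Bukhvalov's almost-everywhere convergence criterion.
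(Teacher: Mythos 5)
Your proposal is correct, but it is organized around a genuinely different decomposition than the paper's. The two arguments share the same skeleton: identify $\{\mu\}^{\bot\bot}$ with $L^1(\Omega,\mu)$ via Radon--Nikodym, observe that invariance of a band $\{\mathds{1}_B\mu\}^{\bot\bot}$ is equivalent (by Markovian mass preservation and contractivity) to $\mathds{1}_B\mu\in\fix(\cT)$, and derive the decisive contradiction from Theorem \ref{thm:bukhvalov} applied to a sequence of \emph{fixed} indicator functions that is bounded in $L^\infty$, null in $L^1$, and yet fails to tend to $0$ almost everywhere. The difference is how that sequence is manufactured. The paper never introduces the invariant $\sigma$-algebra: assuming every irreducible set is null, it considers for each $n$ the family $\cA_n$ of invariant sets of measure at most $\frac{1}{n}$, takes a maximal disjoint system in $\cA_n$, and uses the countable sup property of $L^\infty(\Omega,\mu)$ to extract a countable disjoint subfamily with supremum $\mathds{1}$; merging these over $n$ gives the $L^1$-null sequence, and a second application of the countable sup property to the family of all irreducible sets then produces the countable decomposition. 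You instead check that the invariant sets form a sub-$\sigma$-algebra $\Sigma$ (your band-projection commutation argument for closure under intersection is the right one), identify the irreducible components with the atoms of $\Sigma$, and eliminate the non-atomic remainder $N$ by Sierpi\'nski halving, which yields the $L^1$-null sequence of invariant indicators with $\limsup_n \mathds{1}_{B_n}=\mathds{1}_N$. Your route buys the classical conceptual picture (ergodic decomposition as the atoms of the invariant $\sigma$-algebra, with exhaustiveness of the decomposition transparent), at the price of the $\sigma$-algebra verification and a non-atomicity/Sierpi\'nski input that the paper's purely lattice-theoretic argument avoids. Two small remarks: your detour through the adjoint is unnecessary, since in the $L^1(\Omega,\mu)$ picture $B\in\Sigma$ already gives $T(t_0)\mathds{1}_B=\mathds{1}_B$, so you may apply Theorem \ref{thm:bukhvalov} directly to the restriction of $T(t_0)$ to $L^\infty(\Omega,\mu)$, exactly as the paper does, and thereby avoid justifying that adjoints preserve the kernel property; and the step you flag as the main obstacle --- that the kernel property survives compression to the invariant band and restriction to $L^\infty$ --- is indeed needed but standard (kernel operators form a band, and compression by band projections is positive and order continuous), and the paper's own proof relies on it just as implicitly.
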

\begin{proof}
	By the Radon-Nikodym theorem we may identify the band $\{\mu\}^{\bot\bot}$ with
	$L^1(\Omega,\mu)$ and thus consider $\cT$ as a contractive semigroup on $L^1(\Omega,\mu)$.
	Since the measure $\mu$ is $\cT$-invariant and corresponds to $\mathds{1} \in L^1(\Omega,\mu)$,
	$T(t)\mathds{1}_B \leq \mathds{1}$ for all $B\in \cB(\Omega)$ and $t\geq 0$.

	In this proof, 
	we call a Borel set $B\subset \cB(\Omega)$ \emph{invariant} if $T(t)\mathds{1}_B \leq \mathds{1}_B$ almost
	everywhere for all $t\geq 0$ and \emph{irreducible} 
	if for every invariant Borel set $A\subset B$ we have $\mu(A)=0$ or $\mu(A)=\mu(B)$.

	With this identification and notation, we have to find at most
	countable many disjoint invariant and irreducible Borel sets $B_1,B_2,\dots$ such that
	$B_1 \cup B_2 \cup \dots = \Omega$.

	First, we show that a Borel set $B$ is invariant if and only if $\Omega\setminus B$
	is invariant if and only if $\mathds{1}_B, \mathds{1}_{\Omega\setminus B}\in \fix(\cT)$.
	Let $B\in \cB(\Omega)$ be invariant. Then, for every $t\geq 0$,
	\[ T(t)\mathds{1}_{\Omega\setminus B} = T(t)\mathds{1} - T(t)\mathds{1}_B \geq \mathds{1} - \mathds{1}_B = \mathds{1}_{\Omega\setminus B} \quad \mu\text{-almost everywhere.}\]
	Since $T(t)$ is contractive, $\mathds{1}_{\Omega\setminus B}$ is a fixed point of $T(t)$ and so is $\mathds{1}_B$.

	Next, we prove the existence of an irreducible Borel set of positive measure.
	Aiming for a contradiction, we assume that $\mu(B)=0$ for every irreducible $B\in \cB(\Omega)$.
	For $n\in\N$ define
	\[ \cA_n \coloneqq \left\{ \mathds{1}_A : A\subset \cB(\Omega) \text{ is invariant and } \mu(A)\leq \frac{1}{n} \right\}\]
	and let $B_n\in \cB(\Omega)$ such that
	\[ \mathds{1}_{B_n} = \bigvee_{A\in \cA_n} \mathds{1}_A \quad \mu\text{-almost everywhere,} \]
	where the supremum is taken in the order complete lattice $L^\infty(\Omega,\mu)$.
	Then $B_n$, hence by the above also $\Omega\setminus B_n$, is invariant. 
	Since we assumed every irreducible set to be a null-set
	and $\mu(\tilde B)>\frac{1}{n}$ for every measurable invariant subset $\tilde B \subset \Omega\setminus B_n$, we conclude that
	$\mu(\Omega\setminus B_n)=0$.  Therefore, $\mathds{1}_{B_n} = \mathds{1}$ almost everywhere.

	Let $\cD_n$ be a maximal disjoint system in $\cA_n$. 
	By the countable sup property of $L^\infty(\Omega,\mu)$, see \cite[Thm 8.22]{aliprantis2006}, we obtain
	the existence of a countable subset $(\mathds{1}_{A_{k,n}})_{k\in\N} \subset \cD_n$
	with $\sup_{k\in\N} \mathds{1}_{A_{k,n}} = \mathds{1}$. Since the functions $\{\mathds{1}_{A_{k,n}} : k\in\N\}$
	are pairwise disjoint, it follows that $\lim_{k\to\infty} \lVert \mathds{1}_{A_{k,n}} \rVert_{L^1}=0$
	for every $n\in\N$.
	By ordering the sets $A_{k,n}$ decreasing in measure, 
	we obtain a single sequence $(A_n)\subset \cB(\Omega)$ that contains
	every set $A_{k,n}$ and satisfies $\lim \lVert \mathds{1}_{A_n} \rVert_{L^1} = 0$.
	Now it follows from Theorem \ref{thm:bukhvalov} that $\lim T(t_0)\mathds{1}_{A_n} = 0$ almost everywhere
	in contradiction to 
	\[ \sup_{n\geq k} T(t_0) \mathds{1}_{A_n} = \sup_{n\geq k} \mathds{1}_{A_n} = \mathds{1} \]
	for all $k\in\N$. Thus, there exists an irreducible set $B\in\cB(\Omega)$ with $\mu(B)>0$.
	Moreover, the same argument shows that every invariant set $\tilde \Omega \in\cB(\Omega)$ of positive measure contains 
	an irreducible Borel set of positive measure.

	Let $\cD \coloneqq \{ D \in \cB(\Omega) : D \text{ is irreducible} \}$.
	Then $\sup\{ \mathds{1}_D : D \in \cD \} = \mathds{1}$. By the countable sup property, we find a sequence $(D_n)\subset \cD$
	with $\sup \mathds{1}_{D_n} = \mathds{1}$. This proves the claim.
\end{proof}

Let us note that,
by applying a general version of Bukhvalov's theorem proven in \cite{grobler1980},
Theorem \ref{thm:irreddecomp} can be generalized to contractive semigroups containing a kernel operator
on an arbitrary Banach lattice whose norm is strictly monotone and order continuous.

Combining Greiner's Theorem \ref{thm:greiner} and the irreducible decomposition of Theorem \ref{thm:irreddecomp},
we obtain stability of $\cT$ on the band spanned by its fixed space.

\begin{prop}
	\label{prop:convfixed}
	If $T(t_0)$ is a kernel operator for some $t_0>0$, then
	$\lim_{t\to\infty} T(t) \mu$ exists for all $\mu\in \fix(\cT)^{\bot\bot}$.
\end{prop}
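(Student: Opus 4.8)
The plan is to reduce the assertion to Greiner's Theorem~\ref{thm:greiner}, applied on the irreducible components produced by Theorem~\ref{thm:irreddecomp}. First I would reduce to a positive measure lying in the band of a \emph{single} invariant measure. Since $\fix(\cT)^{\bot\bot}$ is an ideal it suffices to treat $\mu\in\fix(\cT)^{\bot\bot}_+$, the general case following from the Jordan decomposition $\mu=\mu^+-\mu^-$. As each $T(t)$ is positive and preserves the total mass of positive measures (because $T(t)'\mathds1=\mathds1$), the usual $L$-space argument shows that $\fix(\cT)$ is a sublattice: from $T(t)\nu=\nu$ one gets $T(t)\nu^+\ge\nu^+$, and equality of norms forces $T(t)\nu^+=\nu^+$. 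Hence $\fix(\cT)^{\bot\bot}$ is the band generated by the positive invariant measures, and for $\mu\ge0$ in this band one has $\mu=\sup\{P_\rho\mu:\rho\in\fix(\cT)_+\}$, where $P_\rho$ is the band projection onto $\{\rho\}^{\bot\bot}$. Since $\cM(\Omega)$ has order continuous norm, and thus the countable sup property, there is a sequence $(\rho_k)\subset\fix(\cT)_+$ with $\mu=\sup_kP_{\rho_k}\mu$. The norm-convergent sum $\rho\coloneqq\sum_k 2^{-k}(1+\lVert\rho_k\rVert)^{-1}\rho_k$ is again invariant, and $\{\rho_k\}^{\bot\bot}\subset\{\rho\}^{\bot\bot}$ yields $\mu\in\{\rho\}^{\bot\bot}$.

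Next I would decompose and prepare to apply Greiner. By Theorem~\ref{thm:irreddecomp} there are disjoint invariant measures $\rho_n$ with $\rho=\sum_n\rho_n$ such that $\cT$ is irreducible on each $E_n\coloneqq\{\rho_n\}^{\bot\bot}\cong L^1(\Omega,\rho_n)$; consequently $\mu=\sum_nP_{\rho_n}\mu$ with $P_{\rho_n}\mu\in E_n$. On each $E_n$ the restricted semigroup is positive and contractive, carries $\rho_n\neq0$ in its fixed space, and $T(t_0)|_{E_n}$ is again a kernel operator, since the band of kernel operators is stable under composition with the band projection onto the $\cT$-invariant band $E_n$. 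It therefore remains to verify the one hypothesis of Theorem~\ref{thm:greiner} that is not immediate, namely strong continuity.

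I expect strong continuity to be the main obstacle, because stochastic continuity only gives convergence tested against $C_b(\Omega)$, and narrow convergence of densities does not imply $L^1$-convergence. The remedy is to pass through $L^2$. Writing $S(t)$ for the action of $T(t)$ on densities in $E_n$, we have $S(t)\mathds1=\mathds1$ and $S(t)\ge0$, so $S(t)$ is simultaneously an $L^1(\rho_n)$- and an $L^\infty(\rho_n)$-contraction and, by interpolation, an $L^2(\rho_n)$-contraction. Stochastic continuity yields $\langle S(t)h,h\rangle_{L^2(\rho_n)}\to\langle h,h\rangle_{L^2(\rho_n)}$ as $t\downarrow0$ for $h\in C_b(\Omega)$, whence the Hilbert space estimate
\[ \lVert S(t)h-h\rVert_{L^2(\rho_n)}^2\le 2\lVert h\rVert_{L^2(\rho_n)}^2-2\,\Re\langle S(t)h,h\rangle_{L^2(\rho_n)}\xrightarrow{\;t\downarrow0\;}0. \]
Because $C_b(\Omega)$ is dense in $L^2(\Omega,\rho_n)$ and the $S(t)$ are uniform contractions, strong continuity at $0$ extends to all of $L^2(\rho_n)$, and then, via $\lVert\cdot\rVert_{L^1}\le\rho_n(\Omega)^{1/2}\lVert\cdot\rVert_{L^2}$ and the density of $L^2$ in $L^1$, to all of $L^1(\rho_n)=E_n$. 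The semigroup property upgrades this to strong continuity everywhere.

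Finally, Theorem~\ref{thm:greiner} provides $z_n\in\fix(\cT)\cap E_n$ and a functional $z_n^*$ with $\lim_{t\to\infty}T(t)x=\langle z_n^*,x\rangle z_n$ for all $x\in E_n$, in particular for $x=P_{\rho_n}\mu$. To pass from the components back to $\mu=\sum_nP_{\rho_n}\mu$, I would invoke the uniform tail bound coming from mass preservation: $\lVert T(t)P_{\rho_n}\mu\rVert=\lVert P_{\rho_n}\mu\rVert$ and $\sum_n\lVert P_{\rho_n}\mu\rVert=\lVert\mu\rVert<\infty$, so the series $\sum_nT(t)P_{\rho_n}\mu$ converges uniformly in $t$. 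Combined with the convergence of each individual summand, this forces convergence of $T(t)\mu$ as $t\to\infty$, which completes the proof.
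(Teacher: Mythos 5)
Your proof is correct, and its overall architecture is the paper's: decompose into bands on which $\cT$ acts irreducibly via Theorem~\ref{thm:irreddecomp}, apply Greiner's Theorem~\ref{thm:greiner} on each piece, and reassemble using additivity of the total variation norm on $\cM(\Omega)_+$ (your Moore--Osgood step with the uniform tail bound $\lVert T(t)P_{\rho_n}\mu\rVert=\lVert P_{\rho_n}\mu\rVert$ is exactly the paper's explicit $\eps$-estimate with $\lVert\nu_k\rVert\leq\lVert\mu_k\rVert$). You diverge in two places, both substantively. First, the countable reduction: the paper takes a maximal disjoint system $\cD\subset\fix(\cT)_+$ and uses $L$-norm additivity to extract the countably many $\zeta\in\cD$ with $P_\zeta\mu>0$, then passes to irreducible bands with a rather terse ``we may assume the restriction is irreducible''; your weighted sum $\rho=\sum_k 2^{-k}(1+\lVert\rho_k\rVert)^{-1}\rho_k$ produces a \emph{single} invariant measure with $\mu\in\{\rho\}^{\bot\bot}$, so that Theorem~\ref{thm:irreddecomp} applies verbatim once. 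This is cleaner and in fact tightens a small imprecision in the paper, since neither $\mu$ itself nor a member of a maximal disjoint system of $\fix(\cT)_+$ need span an irreducible band. Second, strong continuity of the restricted semigroup on $L^1(\Omega,\rho_n)$, which is needed for Greiner's theorem: the paper outsources this entirely to \cite[Thm 4.6]{hille2009}, whereas you prove it from scratch --- $S(t)\mathds{1}=\mathds{1}$ and positivity give simultaneous $L^1$/$L^\infty$-contractivity, hence $L^2$-contractivity; stochastic continuity applied to $\mu=h\rho_n$ gives $\applied{S(t)h}{h}_{L^2}\to\applied{h}{h}_{L^2}$ for $h\in C_b(\Omega)$; and the Hilbert-space estimate, density of $C_b(\Omega)$ in $L^2(\Omega,\rho_n)$, and uniform contractivity (which also yields left continuity) do the rest. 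This argument is sound and makes the proposition self-contained at the cost of length; the paper's citation is shorter but opaque. You also verify explicitly what the paper leaves implicit, namely that the kernel-operator property of $T(t_0)$ survives restriction to the invariant band $E_n$ (via $0\leq P_{\rho_n}T(t_0)P_{\rho_n}\leq T(t_0)$ and the fact that kernel operators form a band), which is a welcome extra precision. I see no gap in your argument.
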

\begin{proof}
	Since every $T(t)$ is a contraction and the total variation norm is strictly monotone on the positive cone $\cM(\Omega)_+$,
	for all $\mu\in \fix(\cT)$ it follows from
	\[ \lvert \mu\rvert = \lvert T(t)\mu\rvert \leq T(t) \lvert \mu\rvert \]
	that $T(t)\lvert \mu\rvert = \lvert \mu\rvert$.
	Hence, $\fix(\cT)$ is a sublattice. 

	Now let $\mu\in \fix(\cT)^{\bot\bot}_+$ and denote by $P$ the band projection onto $\fix(\cT)^{\bot}$.
	Let $\cD$ be a maximal disjoint system in $\fix(\cT)_+$. 
	Since the total variation norm on $\cM(\Omega)$ is a $L$-norm, i.e. it is additive on the positive cone $\cM(\Omega)_+$,
	there exists an
	at most countable subset $\mathscr{C} \subset \cD$ such that $\mu \in \cC^{\bot\bot}$.
	In fact, for $\zeta\in \cD$ let $P_\zeta$ denote the band projection onto $\{\zeta\}^{\bot\bot}$. 
	Then for every $m\in\N$
	there exist only finitely many $\zeta\in \cD$ such that $\lVert P_\zeta \mu \rVert \geq \frac{1}{m}$. This implies that there
	are at most countably many $\zeta\in \cD$ such that $P_\zeta \mu >0$. 
	Let $\mathscr{C} \coloneqq (\zeta_k) \coloneqq \{ \zeta \in \cD$ and $(\mu_k) \coloneqq (P_\zeta \mu)_{\zeta \in \cC}$. 

	Since $\mu$ is a fixed point of $\cT$, the band $\{\mu \}^{\bot\bot}$ is $\cT$-invariant. 
	By  Theorem \ref{thm:irreddecomp}, we may assume that 
	the restriction of $\cT$ to $\{\mu\}^{\bot\bot}$ is irreducible. Moreover,
	since $\cT$ is stochastically continuous, this restriction
	is strongly continuous by \cite[Thm 4.6]{hille2009}.
	Thus,
	for each $k\in \N$, the limit $\nu_k \coloneqq \lim_{t\to\infty} T(t)\mu_k \in \{ \zeta_k \}^{\bot\bot}$ exists
	by Theorem \ref{thm:greiner} applied to the Banach lattice $\{\zeta_k\}^{\bot\bot}$.

	Next, we show that $\tau_n \coloneqq \nu_1 + \dots + \nu_n$ is a  Cauchy sequence.
	For a given $\eps>0$ choose $n\in\N$ such that $\sum_{k=n+1}^\infty \lVert \mu_k \rVert < \eps$. 
	Then
	\[\lVert \tau_n - \tau_m \rVert =  \sum_{k=n+1}^m \lVert \nu_k \rVert  \leq \sum_{k=n+1}^\infty \lVert \mu_k \rVert < \eps\]
	for all $m >n$. Therefore,
	$\tau \coloneqq \lim \tau_m \in \fix(\cT)^{\bot\bot}$ exists. We prove that $\lim T(t)\mu = \tau$.
	Let $\eps>0$ and choose $n\in\N$ such that $\sum_{k=n+1}^\infty \lVert \mu_k\rVert < \eps$.
	Since $T(t)\mu_k$ converges to $\nu_k$ we find $s>0$ such that 
	$\lVert T(t)\mu_k - \nu_k\rVert < \eps/n$ for all $t\geq s$ and all $1\leq k\leq n$.
	Finally, we obtain that
	\begin{align*}
		\lVert T(t)\mu - \tau\rVert &\leq \sum_{k=1}^\infty \lVert T(t)\mu_k - \nu_k \rVert \\
		&\leq \sum_{k=1}^n \lVert T(t)\mu_k - \nu_k\rVert + \sum_{k=n+1}^\infty 2 \lVert \mu_k\rVert  < n\cdot \frac{\eps}{n} + 2\eps
	\end{align*}
	for all $t\geq s$. This shows that $\lim_{t\to\infty} T(t)\mu = \tau$.
\end{proof}

Let us remark that, using a generalized version of Theorem \ref{thm:irreddecomp},
Proposition \ref{prop:convfixed} remains true for every positive and contractive semigroup $\cT=(T(t))_{t\geq 0}$
on a Banach lattice with strictly monotone and order continuous norm such that the restriction of $\cT$ to $\{x\}^{\bot\bot}$ is
strongly continuous for every $x\in \fix(\cT)^{\bot\bot}$.

Now we prove our main result.

\begin{thm}
	\label{thm:tauberian}
	If $\cT$ is $B_b$-ergodic and $T(t_0)$ is strong Feller for some $t_0>0$, 
	then $\lim_{t\to\infty} T(t)\mu$ exists for all $\mu\in\cM(\Omega)$.
\end{thm}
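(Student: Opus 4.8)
The plan is to split every measure along the band decomposition $\cM(\Omega)=\fix(\cT)^{\bot\bot}\oplus\fix(\cT)^{\bot}$ induced by the band projection $P$ onto $\fix(\cT)^{\bot}$, and to feed the two components into the two propositions already established. First I would observe that $T(2t_0)=T(t_0)T(t_0)$ is the product of two strong Feller operators, hence ultra Feller, and therefore a kernel operator by Theorem \ref{thm:ultrafellerkernel}. Consequently Proposition \ref{prop:convfixed} (applied with the kernel operator $T(2t_0)$) yields that $\lim_{t\to\infty}T(t)\nu$ exists for every $\nu\in\fix(\cT)^{\bot\bot}$. On the other hand, Proposition \ref{prop:stabilityonfixbot} gives $\lim_{t\to\infty}PT(t)\mu=0$ for every $\mu\in\cM(\Omega)$, so asymptotically the mass of the orbit concentrates in $\fix(\cT)^{\bot\bot}$.

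The point where care is needed is that one cannot simply argue componentwise. While $\fix(\cT)^{\bot\bot}$ is $\cT$-invariant, the complementary band $\fix(\cT)^{\bot}$ need not be, so $(I-P)T(t)\mu$ is in general \emph{not} of the form $T(t-s)$ applied to a fixed measure in $\fix(\cT)^{\bot\bot}$, and Proposition \ref{prop:convfixed} cannot be invoked on it directly. I expect this lack of invariance of $\fix(\cT)^{\bot}$ to be the main obstacle, and I would circumvent it by combining the two propositions through the semigroup law and a Cauchy argument, rather than trying to identify the limit explicitly.

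Concretely, given $\eps>0$, I would use Proposition \ref{prop:stabilityonfixbot} to fix $s>0$ with $\lVert PT(s)\mu\rVert<\eps$ and set $\nu\coloneqq(I-P)T(s)\mu\in\fix(\cT)^{\bot\bot}$, so that $T(s)\mu=\nu+PT(s)\mu$. For $t\ge s$ the semigroup property gives
\[ T(t)\mu = T(t-s)\nu + T(t-s)PT(s)\mu,\]
and contractivity of $T(t-s)$ bounds the second summand by $\lVert PT(s)\mu\rVert<\eps$. By the first step $T(t-s)\nu$ converges as $t\to\infty$, to $L$ say, so there is $r_0>0$ with $\lVert T(r)\nu-L\rVert<\eps$ for all $r\ge r_0$. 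Hence $\lVert T(t)\mu-L\rVert<2\eps$ for every $t\ge s+r_0$, and the triangle inequality yields $\lVert T(t)\mu-T(t')\mu\rVert<4\eps$ for all $t,t'\ge s+r_0$. Thus $(T(t)\mu)_{t\ge0}$ is norm-Cauchy as $t\to\infty$ and, by completeness of $\cM(\Omega)$, convergent; this holds for arbitrary $\mu\in\cM(\Omega)$, proving the theorem.
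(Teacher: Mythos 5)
Your proof is correct and takes essentially the same route as the paper: decompose the orbit at a late time via the band projection onto $\fix(\cT)^{\bot}$, shrink the projected part with Proposition \ref{prop:stabilityonfixbot}, apply Proposition \ref{prop:convfixed} (through ultra Feller $\Rightarrow$ kernel operator, Theorem \ref{thm:ultrafellerkernel}) to the component in $\fix(\cT)^{\bot\bot}$, and combine via the semigroup law, contractivity and a Cauchy argument. The only cosmetic difference is that you establish the Cauchy property of the orbit $(T(t)\mu)$ directly, while the paper extracts a sequence of limits $\nu_n$ along times $t_n$ and shows that this sequence is Cauchy; the two arguments are interchangeable.
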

\begin{proof}
	Let $\mu \in \cM(\Omega)_+$ and denote by $P$ the band projection onto $\fix(\cT)^{\bot}$.
	By Proposition \ref{prop:stabilityonfixbot}, there exists an increasing sequence $t_n >0$ such that
	$\lVert PT(t_n)\mu \rVert < \frac{1}{n}$ for all $n\in\N$.
	Define 
	\[\mu_n \coloneqq (I-P)T(t_n)\mu \in \fix(\cT)^{\bot \bot}.\]
	It follows from \cite[\S 1.5]{revuz1975} that $T(2t_0)$ is ultra Feller and therefore a kernel
	operator by Theorem \ref{thm:ultrafellerkernel}. Therefore, by
	Proposition \ref{prop:convfixed}, $\nu_n \coloneqq \lim_{t\to\infty} T(t)\mu_n$ exists in $\fix(\mathscr{S})^{\bot\bot}$
	for every $n\in\N$.
	Hence, there exists an increasing sequence $s_n > 0$ such that
	$\lVert T(t)\mu_n -\nu_n\rVert < \frac{1}{n}$ for all $n\in\N$ and $t\geq s_n$.
	This implies that for every $n\in\N$
	\begin{align*}
	\lVert T(t+t_n)\mu - \nu_n \rVert &\leq \lVert T(t) (I-P) T(t_n)\mu - \nu_n \rVert + \lVert T(t) P T(t_n) \mu\rVert\\
	&\leq \lVert T(t)\mu_n -\nu_n \rVert + \lVert PT(t_n)\mu \rVert < \frac{2}{n}
	\end{align*}
	for all $t \geq s_n$.
	Since
	\begin{align*}
	\lVert \nu_n - \nu_m \rVert &\leq \lVert \nu_n - T(s_m+t_m)\mu\rVert + \lVert T(s_m +t_m)\mu -\nu_m\rVert <\frac{2}{n} + \frac{2}{m}
	\end{align*}
	for all $m\geq n$, $(\nu_n)$ is a Cauchy sequence. Let $\nu\coloneqq \lim \nu_n \in \fix(\mathscr{S})^{\bot\bot}$. 
	Then for every $\eps>0$ there exists $n\in\N$
	such that
	\begin{align*}
		\lVert T(t)\mu - \nu\rVert &\leq \lVert T(t)\mu - \nu_n \rVert + \lVert \nu_n -\nu \rVert < \eps
	\end{align*}
	for all $t\geq t_n+s_n$ which proves the claim.
\end{proof}

Making use of the characterization of weak ergodicity in \cite[Thm 5.7]{gerlach2013}, we obtain the following Corollary.

\begin{cor}
	If $T(t_0)$ is strong Feller for some $t_0>0$, then
	the following are equivalent
\begin{enumerate}[(i)]
\item $\fix(\cT)$ separates $\fix(\cT') \coloneqq \{ f\in C_b(\Omega) : T'(t)f=f \text{ for all } t\geq 0\}$.
\item The semigroup $\cT$ is weakly ergodic in the sense that $\lim_{t\to\infty} A_t \mu$ exists in the $\sigma(\cM(\Omega),C_b(\Omega))$-topology 
	for all $\mu\in\cM(\Omega)$.
\item The semigroup $\cT$ is $B_b$-ergodic.
\item $\lim_{t\to\infty} T(t)\mu$ exists for each $\mu\in\cM(\Omega)$.
\end{enumerate}
\end{cor}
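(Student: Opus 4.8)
The goal is to prove the equivalence of four conditions for a stochastically continuous Markovian semigroup $\cT$ whose operator $T(t_0)$ is strong Feller for some $t_0>0$. Three of these implications are already essentially available from the main theorem and cited results, so the plan is to arrange them into a clean cycle.

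First I would establish the equivalence of (i), (ii) and (iii) by invoking the characterization of weak ergodicity. The cited result \cite[Thm 5.7]{gerlach2013} gives, for eventually strong Feller semigroups, the equivalence between the separation property (i) and weak ergodicity in the $\sigma(\cM(\Omega),C_b(\Omega))$-sense, which is precisely condition (ii). So the content of that theorem supplies (i)$\Leftrightarrow$(ii). The remaining task at this stage is to connect weak ergodicity (ii) with $B_b$-ergodicity (iii). The implication (iii)$\Rightarrow$(ii) is immediate, since $C_b(\Omega)\subset B_b(\Omega)$ and hence convergence in $\sigma(\cM(\Omega),B_b(\Omega))$ is stronger than convergence in $\sigma(\cM(\Omega),C_b(\Omega))$. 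For the converse (ii)$\Rightarrow$(iii), I expect to use the eventually strong Feller property to upgrade weak convergence of the Ces\`aro averages on continuous test functions to convergence against all bounded measurable functions; this is the genuinely Tauberian point of the corollary and is the step I would treat most carefully.

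The main engine is the implication (iii)$\Rightarrow$(iv), which is exactly Theorem \ref{thm:tauberian}: assuming $B_b$-ergodicity and that $T(t_0)$ is strong Feller, the semigroup converges pointwise in total variation norm. I would simply cite this. To close the cycle, I would prove (iv)$\Rightarrow$(iii): norm convergence of $T(t)\mu$ as $t\to\infty$ implies norm convergence of the Ces\`aro averages $A_t\mu$ to the same limit, by the standard fact that norm-convergence of a function forces its time-averages to converge to the same value, and norm convergence certainly implies convergence in the weaker $\sigma(\cM(\Omega),B_b(\Omega))$-topology. This gives $B_b$-ergodicity directly.

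The hard part will be the upgrade (ii)$\Rightarrow$(iii), i.e.\ passing from weak ergodicity tested against $C_b(\Omega)$ to ergodicity tested against all of $B_b(\Omega)$. The key idea is to exploit that $T(t_0)$ maps $B_b(\Omega)$ into $C_b(\Omega)$ at the level of the adjoint, so that for a bounded measurable $f$ the averages $\applied{A_t\mu}{T'(t_0)f}$ are controlled by weak convergence, and then to use the semigroup relation $A_tT(t_0)=T(t_0)A_t$ together with the existing limit of $A_t\mu$ to transfer convergence back to $\applied{A_t\mu}{f}$. Rather than re-derive this, the cleanest route is to observe that weak ergodicity together with the eventually strong Feller property already places us in the hypotheses of Theorem \ref{thm:tauberian} via the established equivalence of (i) and (ii), so in practice I would route (ii)$\Rightarrow$(iv) through \cite[Thm 5.7]{gerlach2013} and Theorem \ref{thm:tauberian}, and then recover (iii) from (iv) as above, thereby avoiding the delicate direct argument entirely.
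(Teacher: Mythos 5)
There is a genuine gap, and it sits exactly at the one implication carrying the corollary's real content. Your fallback route for (ii)$\Rightarrow$(iv) is circular: Theorem \ref{thm:tauberian} takes $B_b$-ergodicity, i.e.\ condition (iii), as its hypothesis, not weak ergodicity (ii), and neither (i) nor (ii) ``places us in the hypotheses of Theorem \ref{thm:tauberian}'' --- the only entrance is via (iii), which is precisely the implication you declared you would avoid proving. Tallying what your proposal actually secures --- (i)$\Leftrightarrow$(ii) from \cite[Thm 5.7]{gerlach2013}, (iii)$\Rightarrow$(ii) trivially, and (iii)$\Leftrightarrow$(iv) via Theorem \ref{thm:tauberian} together with the standard averaging argument --- the four conditions fall into two blocks $\{(\textrm{i}),(\textrm{ii})\}$ and $\{(\textrm{iii}),(\textrm{iv})\}$ with implications running only from the second block to the first, so the equivalence is not established.

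The missing step (ii)$\Rightarrow$(iii) is exactly where the paper's proof does its work, and it needs two ingredients. First, the norm fact $\lim_{t\to\infty}\lVert (T(t_0)-I)A_t\mu\rVert=0$, quoted from \cite[Ex 3.6]{gerlach2013}; it follows from
\[ T(t_0)A_t\mu - A_t\mu = \frac{1}{t}\Bigl(\int_t^{t+t_0} T(s)\mu \,\dx s - \int_0^{t_0} T(s)\mu \,\dx s\Bigr), \]
whose norm is at most $2t_0\lVert\mu\rVert/t$. Second, for $f\in B_b(\Omega)$ and $\tilde\mu$ the weak limit of $A_t\mu$, the decomposition
\[ \applied{A_t\mu-\tilde\mu}{f} = \applied{(I-T(t_0))A_t\mu}{f} + \applied{A_t\mu-\tilde\mu}{T'(t_0)f}, \]
valid because $\tilde\mu\in\fix(\cT)$ gives $\applied{\tilde\mu}{f}=\applied{\tilde\mu}{T'(t_0)f}$; the first term vanishes by the norm fact and the second by (ii), since $T'(t_0)f\in C_b(\Omega)$ by the strong Feller property. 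Your own sketch of the direct upgrade correctly names the adjoint mapping $B_b(\Omega)$ into $C_b(\Omega)$ and the commutation of $A_t$ with $T(t_0)$, but without the norm estimate the error term $\applied{(I-T(t_0))A_t\mu}{f}$ is uncontrolled, so even the sketch would not close as stated. A smaller point: the paper never needs the (ii)$\Rightarrow$(i) half of \cite[Thm 5.7]{gerlach2013}; it closes the cycle by proving (iv)$\Rightarrow$(i) directly (given $f\in\fix(\cT')$ and $\mu$ with $\applied{\mu}{f}\neq 0$, the limit $\tilde\mu=\lim_{t\to\infty} T(t)\mu$ lies in $\fix(\cT)$ and $\applied{\tilde\mu}{f}=\applied{\mu}{f}\neq 0$), a route you could adopt to avoid leaning on the equivalence version of the cited theorem.
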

\begin{proof}
	Let us assume (i) and pick $\mu \in \cM(\Omega)$. 
	It follows from \cite[Thm 5.7]{gerlach2013} that there exists $\tilde \mu \in \fix(\cT)$ such that
	\[ \lim \applied{A_t \mu - \tilde \mu}{f} = 0\]
	for all $f\in C_b(\Omega)$, i.e.\ assertion (ii) holds.

	As explained in \cite[Ex 3.6]{gerlach2013}, one has that
	\[ \lim_{t\to\infty} \lVert (T(t_0) - I)A_t \mu \rVert =0. \]
	Since $T(t_0)$ is strong Feller, assertion (ii) implies that
	\[ \lim_{t\to\infty} \applied{A_t \mu-\tilde \mu}{f} = \lim_{t\to\infty} \applied{A_t \mu - T(t_0)A_t \mu}{f} + \applied{A_t \mu - \tilde \mu}{T'(t_0)f} =0\]
	for all $f\in B_b(\Omega)$, i.e.\  $\cT$ is $B_b$-ergodic.
	
	Theorem \ref{thm:tauberian} yields that (iii) implies (iv).
	In order to prove that (i) follows from (iv), we assume that $\lim T(t)\mu$ exists for each $\mu\in \cM(\Omega)$.
	For $f\in \fix(\cT')$ choose $\mu\in \cM(\Omega)$ such that $\applied{\mu}{f} \eqqcolon \alpha \neq 0$.
	Let $\tilde\mu \coloneqq \lim T(t)\mu \in \fix(\cT)$. Then 
	\[ \applied{\tilde \mu}{f} = \lim_{t\to\infty} \applied{T(t)\mu}{f} = \alpha \neq 0\]
	which shows that $\fix(\cT)$ separates $\fix(\cT')$.
\end{proof}

\bibliographystyle{abbrv}
\bibliography{analysis}

\end{document}